\documentclass[11pt]{amsart}
\usepackage{graphicx}
\usepackage{multicol,multirow}
\usepackage{amsmath,amssymb,amsfonts}
\usepackage{mathrsfs}
\usepackage{amsthm}
\usepackage{rotating}
\usepackage{appendix}
\usepackage[numbers]{natbib}
\usepackage{array}
\usepackage{hyperref}

\newtheorem{theorem}{Theorem}[section]
\newtheorem{lemma}[theorem]{Lemma}
\newtheorem{definition}{Definition}[section]
\newtheorem*{thm*}{Theorem}
\newtheorem{remark}[theorem]{Remark}
\newtheorem{example}[theorem]{Example}

\usepackage{color}
\usepackage{graphicx}
\usepackage{color}
\usepackage{graphicx}
\usepackage{etoolbox}
\usepackage{tikz}
\usepackage{adjustbox}
\usepackage{wrapfig}
\usepackage{diagbox}
\usepackage{array}
\usepackage{multirow}
\usepackage{todonotes}

\newcommand{\hop}{\vskip .3cm\noindent} 
\newcommand{\hip}{\vskip .1cm\noindent}

\newcommand{\Isom}{\operatorname{Isom}}

\begin{document}


\title[\resizebox{4.6in}{!}{Minimal growth rates in higher dimensions}]{Hyperbolic Coxeter groups of minimal growth rates in higher dimensions}

\author{Naomi Bredon}
\address{Department of Mathematics\\
University of Fribourg\\
CH-1700 Fribourg\\Switzerland}
\email{naomi.bredon@unifr.ch}

\subjclass[2010]{20F55, 26A12 (primary); 22E40, 11R06 (secondary)}

\keywords{Coxeter group, growth rate, hyperbolic Coxeter polyhedron, affine vertex stabiliser}

\begin{abstract}
The cusped hyperbolic $n$-orbifolds of minimal volume are well known for $n\leq 9$. Their fundamental groups are related to the Coxeter $n$-simplex groups $\Gamma_n$ listed in Table \ref{Poly}. In this work, we prove that $\Gamma_n$ has minimal growth rate among all non-cocompact Coxeter groups of finite covolume in $\hbox{Isom}\mathbb H^n$. In this way, we extend previous results of Floyd for $n=2$ and of Kellerhals for $n=3$ respectively. Our proof is a generalisation of the methods developed in \cite{Bred-Kell} for  the cocompact case.
\end{abstract}

\maketitle

\section{Introduction}
\label{section1}

\noindent
Let $\mathbb H^n$ denote the real hyperbolic $n$-space with its isometry group $\hbox{Isom}\mathbb H^n$. 
\newline
A {\em hyperbolic Coxeter polyhedron} $P\subset \mathbb H^n$ is a convex polyhedron of finite volume all of whose dihedral angles are integral submultiples of $\pi$. Associated to $P$ is the {\em hyperbolic Coxeter group} $\Gamma \subset \hbox{Isom}\mathbb H^n$ generated by the reflections in the bounding hyperplanes of $P$. 
By construction, $\Gamma$ is a discrete group with associated orbifold $O^n=\mathbb H^n / \Gamma$ of finite volume.

\hip
We focus on {\em non-compact} hyperbolic Coxeter polyhedra, having at least one ideal vertex $v_{\infty} \in \partial \mathbb H^n$. Notice that the stabiliser of the vertex $v_{\infty}$ is an affine Coxeter group. The group $\Gamma$ is called {\em non-cocompact}, and its quotient space $O^n$ has at least one cusp.  
\hip 
The hyperbolic Coxeter group $\Gamma$ is the geometric realisation of an  abstract Coxeter system $(W,S)$ consisting of a group $W$ with a finite generating set $S$ together with the relations $s^2=1$ and $(ss')^{m_{ss'}}=1$, where $m_{ss'}=m_{s's}\in\{2,3,\ldots,\infty\}$ for all $s,s'\in S$ with $s\not=s'$. The {\em growth series} $f_S(t)$ of $W=(W,S)$  is given by 
\[f_S(t)=1+\sum\limits_{k\ge1}a_kt^k \, , \]
 where $a_k\in\mathbb Z$ is the number of words in $W$ with $S$-length $k$. The {\em growth rate} $\tau_{W}$ of $W=(W,S)$ is defined as the inverse of the radius of convergence of $f_S(t)$.
\hip 
We are interested in small growth rates of non-cocompact hyperbolic Coxeter groups in $\hbox{Isom}\mathbb H^n$ for $n\geq 2$. For $n=2$, Floyd \cite{Fl} showed that the Coxeter group $\Gamma_2 = [3,\infty]$ generated by the reflections in the triangle with angles $\pi/2, \pi/3$ and $0$ is the (unique) group of minimal growth rate. For $n=3$, Kellerhals \cite{K0} proved that the tetrahedral group $\Gamma_3$ generated by the reflections in the Coxeter tetrahedron with symbol $[6,3,3]$ realises minimal growth rate in a unique way.

\hip

Consider the hyperbolic Coxeter $n$-simplices and their reflection groups $\Gamma_n \subset \hbox{Isom}\mathbb H^n$ depicted in Table \ref{Poly}. 
For their volumes, we refer to \cite{JKRT}. Observe that $\Gamma_n$ is of minimal covolume among all hyperbolic Coxeter $n$-simplex groups. 

The aim of this work is to prove the following result in the context of growth rates.

\begin{thm*}[Theorem]\label{NB} Let $2\leq n \leq 9$. Among all non-cocompact hyperbolic Coxeter groups of finite covolume in $\Isom \mathbb H^n$, the group $\Gamma_n$ given in Table \ref{Poly} has minimal growth rate, and as such, it is unique.
\end{thm*}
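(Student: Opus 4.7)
The plan is to extend the strategy of the cocompact analysis \cite{Bred-Kell} to the non-cocompact setting, exploiting the fact that any finite-covolume Coxeter polyhedron $P\subset \mathbb H^n$ with an ideal vertex $v_\infty\in\partial\mathbb H^n$ forces the Coxeter diagram $\Sigma$ of the reflection group $W=W(\Sigma)$ to contain a connected affine (Euclidean) Coxeter subdiagram of rank $n$, namely the stabiliser $W_\infty$ of $v_\infty$. The growth rate $\tau_W$ is the reciprocal of the smallest positive real pole of the growth series $f_S(t)$, and by Steinberg's formula
\[
\frac{1}{f_S(t^{-1})}=\sum_{T\subseteq S,\; W_T \text{ finite}}\frac{(-1)^{|T|}}{f_T(t)},
\]
so $\tau_W$ is encoded entirely by the spherical parabolic subsystems of $\Sigma$. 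The proof will compare this rational function for an arbitrary admissible competitor with its value for $\Gamma_n$.

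The first ingredient is a collection of combinatorial monotonicity lemmas in the spirit of \cite{Bred-Kell}: adjoining a generator (i.e.\ a facet of $P$) weakly increases $\tau_W$, reducing a finite edge label $m_{ss'}$ weakly decreases $\tau_W$, and replacing a subdiagram by one corresponding to a smaller spherical Coxeter subgroup acts in the same direction, with strict inequality as soon as the comparison is non-trivial. In the non-cocompact regime these reductions must be \emph{admissible}, meaning that the resulting diagram still realises a finite-volume hyperbolic Coxeter polyhedron with at least one ideal vertex, i.e.\ a rank-$n$ affine subdiagram must be preserved. It is precisely this constraint which replaces the cocompactness hypothesis from \cite{Bred-Kell}.

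Using these tools one invokes the classification of non-cocompact finite-covolume hyperbolic Coxeter polyhedra in dimensions $2\le n\le 9$ to reduce the theorem to a finite list of candidate diagrams. For $n=2,3$ the argument recovers and refines the results of \cite{Fl} and \cite{K0}. For $4\le n\le 9$, the candidates split into simplices (classified and tabulated, as in Table \ref{Poly}) and the known non-simplex families (pyramids over products of simplices, prisms, and a few sporadic examples). For each non-simplex $\Sigma$ one locates a sub-configuration whose spherical/affine parabolic structure already forces $\tau_{W(\Sigma)}>\tau_{\Gamma_n}$, and then concludes via the monotonicity lemmas. For each simplex candidate $\Sigma\neq \Sigma(\Gamma_n)$ one writes down the Steinberg denominator polynomial and verifies by a direct root comparison that its smallest positive real root strictly exceeds that of $\Gamma_n$, yielding both minimality and uniqueness.

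The main obstacle is expected to be the non-simplex case in dimensions $4\le n\le 9$, where $\Sigma$ can carry several distinct affine subdiagrams (one per ideal vertex) and the monotonicity moves must be chosen so as to land in a geometrically realisable hyperbolic Coxeter group rather than an abstractly larger diagram for which the notion of $\tau_W$ ceases to be geometric. A secondary difficulty is strictness: whenever a reduction move is only weakly monotone, an additional direct computation of the relevant growth-polynomial roots must accompany it in order to secure the uniqueness of $\Gamma_n$.
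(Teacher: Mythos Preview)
Your proposal has a genuine gap at its core: you ``invoke the classification of non-cocompact finite-covolume hyperbolic Coxeter polyhedra in dimensions $2\le n\le 9$'' to reduce to a finite list, but no such classification exists. Hyperbolic Coxeter polyhedra are far from classified in these dimensions; the families you name (pyramids, prisms, sporadics) do not exhaust the possibilities, and there is no finite list to check. This is not a technicality---it is the central difficulty the theorem has to overcome, and the paper's proof is organised precisely so as to avoid any appeal to a classification.

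A second, related problem is your admissibility constraint. You require that each monotonicity move land in a diagram that still realises a finite-volume hyperbolic polyhedron with an ideal vertex. This is both unnecessary and unworkable: Terragni's monotonicity theorem (Theorem~\ref{terr}) compares growth rates of \emph{abstract} Coxeter systems, with no realisability hypothesis, and the paper exploits this freely---comparing against the abstract groups $W_0=[\infty,3,3]$, $W_1$, $W_2$, and even against groups $\Delta_i$ of \emph{infinite} covolume. Also, the affine vertex stabiliser is not in general connected; when the ideal vertex is non-simple the link graph $\sigma_\infty$ has $n_c\ge 2$ affine components, and this reducibility is exactly what drives the argument. The paper's actual route is: handle simplices by the known tables; for $N\ge n+2$ generators, apply the Felikson--Tumarkin structural theorem (Theorem~\ref{thmFT}) to force either a pair of disjoint facets (hence a subgraph $\widetilde A_1$) or non-simplicity; in the $\widetilde A_1$ case embed it in a connected order-$4$ subgraph and compare with $W_0,W_1,W_2$; in the non-simple case extract a small affine component (order $3$, $4$ or $5$) of $\sigma_\infty$, extend it by one node inside $\Sigma$, and compare with the finitely many order-$4$, $5$ or $6$ extensions. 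No classification of $P$ is ever used.
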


\vspace{-3pt} 
\begin{table}[!h]
\tabcolsep=9pt%
\renewcommand*{\arraystretch}{3.5}
\begin{center}
\begin{tabular}{c c  c c }
$\Gamma_2$ & 
\begin{tikzpicture}
\tikzstyle{every node}=[font=\small]
\fill[black] (0,0) circle (0.06cm);
\fill[black] (1/2,0) circle (0.06cm);
\fill[black] (2/2,0) circle (0.06cm);
\draw (0,0) -- (1/2,0)  node [above,midway] {$\infty$} ;
\draw (1/2,0) -- (2/2,0);
\end{tikzpicture}  &
$\Gamma_3$ &
\begin{tikzpicture}
\tikzstyle{every node}=[font=\small]
\fill[black] (0,0) circle (0.06cm);
\fill[black] (1/2,0) circle (0.06cm);
\fill[black] (2/2,0) circle (0.06cm);
\fill[black] (3/2,0) circle (0.06cm);
\draw (0,0) -- (1/2,0) node [above,midway] {6} ; 
\draw (1/2,0) -- (2/2,0) ;
\draw (2/2,0) -- (3/2,0)   ;
\end{tikzpicture}  
\\ 
\hline
$\Gamma_4$ &
\begin{tikzpicture}
\tikzstyle{every node}=[font=\small]
\fill[black] (0,0) circle (0.06cm);
\fill[black] (1/2,0) circle (0.06cm);
\fill[black] (2/2,0) circle (0.06cm);
\fill[black] (3/2,0) circle (0.06cm);
\fill[black] (1/2,1/3) circle (0.06cm);
\draw (0,0) -- (1/2,0)  node [above,midway] {4} ;
\draw (1/2,0) -- (2/2,0) ;
\draw (2/2,0) -- (3/2,0)  ;
\draw (1/2,1/3) -- (1/2,0);
\end{tikzpicture}  
&
$\Gamma_5$ &
\begin{tikzpicture}
\tikzstyle{every node}=[font=\small]
\fill[black] (0,0) circle (0.06cm);
\fill[black] (1/2,0) circle (0.06cm);
\fill[black] (2/2,0) circle (0.06cm);
\fill[black] (3/2,0) circle (0.06cm);
\fill[black] (4/2,0) circle (0.06cm);
\fill[black] (5/2,0) circle (0.06cm);
\draw (0,0) -- (1/2,0);
\draw (1/2,0) -- (2/2,0) node [above,midway] {4};
\draw (2/2,0) -- (3/2,0) ;
\draw (4/2,0) -- (3/2,0) ;
\draw (4/2,0) -- (5/2,0) ;
\end{tikzpicture} 
\\
$\Gamma_6$ &
\begin{tikzpicture}
\tikzstyle{every node}=[font=\small]
\fill[black] (0,0) circle (0.06cm);
\fill[black] (1/2,0) circle (0.06cm);
\fill[black] (2/2,0) circle (0.06cm);
\fill[black] (3/2,0) circle (0.06cm);
\fill[black] (4/2,0) circle (0.06cm);
\fill[black] (5/2,0) circle (0.06cm);
\fill[black] (3/2,1/3) circle (0.06cm);
\draw (0,0) -- (1/2,0) node [above,midway] {4};
\draw (1/2,0) -- (2/2,0) ;
\draw (2/2,0) -- (3/2,0) ;
\draw (4/2,0) -- (3/2,0) ;
\draw (4/2,0) -- (5/2,0) ;
\draw (3/2,1/3) -- (3/2,0) ;
\end{tikzpicture} 
&
$\Gamma_7$&
\begin{tikzpicture}
\fill[black] (2,0) circle (0.06cm);
\fill[black] (2.5,0) circle (0.06cm);
\fill[black] (3,0) circle (0.06cm);
\fill[black] (3.5,0) circle (0.06cm);
\fill[black] (4,0) circle (0.06cm);
\fill[black] (4.5,0) circle (0.06cm);
\fill[black] (3,1/3) circle (0.06cm);
\fill[black] (3,2/3) circle (0.06cm);
\draw (2,0) -- (3,0) ;
\draw (4,0) -- (3,0) ;
\draw (4,0) -- (4.5,0) ;
\draw (3,1/3) -- (3,0) ;
\draw (3,1/3) -- (3,2/3) ;
\end{tikzpicture}\\
$\Gamma_8$ & 
\begin{tikzpicture}
\fill[black] (2.5,0) circle (0.06cm);
\fill[black] (3,0) circle (0.06cm);
\fill[black] (3.5,0) circle (0.06cm);
\fill[black] (4,0) circle (0.06cm);
\fill[black] (4.5,0) circle (0.06cm);
\fill[black] (5,0) circle (0.06cm);
\fill[black] (5.5,0) circle (0.06cm);
\fill[black] (6,0) circle (0.06cm);
\fill[black] (4,1/3) circle (0.06cm);
\draw (2.5,0) -- (3,0) ;
\draw (4,0) -- (3,0) ;
\draw (4,0) -- (6,0) ;
\draw (4,1/3) -- (4,0) ;
\end{tikzpicture}
&
$\Gamma_9$ & 
\begin{tikzpicture}
\fill[black] (1/2,0) circle (0.06cm);
\fill[black] (2/2,0) circle (0.06cm);
\fill[black] (3/2,0) circle (0.06cm);
\fill[black] (4/2,0) circle (0.06cm);
\fill[black] (5/2,0) circle (0.06cm);
\fill[black] (6/2,0) circle (0.06cm);
\fill[black] (7/2,0) circle (0.06cm);
\fill[black] (8/2,0) circle (0.06cm);
\fill[black] (9/2,0) circle (0.06cm);
\fill[black] (3/2,1/3) circle (0.06cm);
\draw (1/2,0) -- (3/2,0) ;
\draw (4/2,0) -- (3/2,0) ;
\draw (4/2,0) -- (5/2,0) ;
\draw (6/2,0) -- (5/2,0) ;
\draw (6/2,0) -- (7/2,0) ;
\draw (7/2,0) -- (8/2,0) ;
\draw (9/2,0) -- (8/2,0) ;
\draw (3/2,1/3) -- (3/2,0) ;
\end{tikzpicture}
\\
\end{tabular}
\end{center}
\caption{The hyperbolic Coxeter $n$-simplex group $\Gamma_n$}
\label{Poly}
\end{table}

Our Theorem should be compared with the volume minimality results for cusped hyperbolic $n$-orbifolds $O^n$ for $2\le n \le 9$. These results are due to Siegel \cite{Sie} for $n=2$, Meyerhoff \cite{Mey} for $n= 3$, Hild-Kellerhals \cite{HK} for $n=4$, and to Hild \cite{Hild} for $n\le 9$. Indeed, the fundamental group of $O^n$ is related to $\Gamma_n$ in all these cases.

\hip 

The work is organised as follows. In Section \ref{section2.1} 
we set the background about hyperbolic Coxeter polyhedra and their associated reflection groups. Furthermore, we present a result of Felikson and Tumarkin about their combinatorics as given by  Theorem  \cite[Theorem B]{FT1} which will be a play a crucial role in our proof.   In fact, we will exploit the (non-)simplicity of the Coxeter polyhedra in a most useful way. 
In Section \ref{section2.2}, we discuss growth series and growth rates of Coxeter groups and introduce the notion of extension of a Coxeter graph. We provide also some illustrating examples. 
The monotonicity result of Terragni \cite{Terragni}  for growth rates, presented in Theorem \ref{terr}, will be an another major ingredient in our proof.
Finally, Section \ref{section3} is devoted to the proof of our result. We perform it in two steps by assuming that the Coxeter graph under consideration has an affine component of type $\widetilde A_1$ or not.

\vspace{10pt}
{\em Acknowledgement.\quad}The author would like to express her gratitude to her supervisor Ruth Kellerhals for all the expert advice and support throughout this project.

\section{Hyperbolic Coxeter groups and growth rates}\label{section2}
\subsection{Coxeter polyhedra and their reflection groups}\label{section2.1}

Let $\mathbb X^n$ denote one of the standard geometric $n$-spaces, the unit $n$-sphere $\mathbb S^n$, the Euclidean $n$-space $\mathbb E^n$ or the real hyperbolic $n$-space $\mathbb H^n$.  As usual, we embed $\mathbb X^n$ in a suitable quadratic space $\mathbb Y^{n+1}$. In the Euclidean case, we take the affine model $\mathbb Y^{n+1}=\mathbb E^n \times \{0\}$.  In the hyperbolic case, we interpret $\mathbb H^n$ as the upper sheet of the hyperboloid in $\mathbb R^{n+1}$, that is, 
\[
\mathbb H^n=\{x\in\mathbb R^{n+1}\mid \,\langle x,x\rangle_{n,1}\,=-1\,,\,x_{n+1}>0\}\,,
\]
where $\,\langle x,x\rangle_{n,1}\,=x_1^2+\dots+x_n^2-x_{n+1}^2$ is the standard Lorentzian form. 
Its boundary $\partial \mathbb H^n$ can be identified with the set 
\[
\partial \mathbb H^n=\{x\in\mathbb R^{n+1}\mid \,\langle x,x\rangle_{n,1}\,=0\,,\,\sum_{k=1}^{n+1}x_k^2=1\,,\,x_{n+1}>0\}\,.
\]
In this picture, the isometry group of $\mathbb H^n$ is isomorphic to the group $PO(n,1)$ of positive Lorentzian matrices leaving the bilinear form $\langle \,, \rangle_{n,1}$ and the upper sheet invariant.
\hip
It is well known that each isometry of $\mathbb X^n$ is a finite composition of reflections in hyperplanes, where a hyperplane $H=H_v$ in $\mathbb X^n$ is characterised by a normal unit 
vector $v \in \mathbb Y^{n+1}$.  
Associated to $H_v$ are two closed half-spaces. We denote by $H^-_v$ the half-space in $\mathbb X^n$ with outer normal vector $v$. 

A (convex) {\em $n$-polyhedron} $P= \cap_{i\in I} H_{i}^- \subset \mathbb X^n$ is the non-empty intersection of a finite number of half-spaces $H_{i}^-$ bounded by the hyperplanes $H_{i}=H_{v_i}$ for $i\in I$.
A {\em facet} of $P$ is of the form $F_i=P\cap H_i$ for some $i\in I$. In the sequel, for $\mathbb X^n \neq \mathbb S^n$, we always assume that $P$ is of finite volume. In the Euclidean case, the implies that $P$ is compact, and in the hyperbolic case, $P$ is the convex hull of finitely many points $v_1,\dots,v_k \in \mathbb H^n \cup \partial\mathbb H^n$. If $v_i \in \mathbb H^n$, then $v_i$ is an {\em ordinary vertex}, and if $v_i\in \partial\mathbb H^n$, then $v_i$ is an {\em ideal vertex} of $P$, respectively. 

If all dihedral angles $\alpha_{ij}=\measuredangle(H_i,H_j)$ formed by intersecting hyperplanes $H_i, H_j$ in the boundary of $P$ are of the form $\frac{\pi}{m_{ij}}$ for an integer $m_{ij} \geq 2$, then $P$ is called a \emph{Coxeter polyhedron} in $\mathbb X^n$. Observe that the Gram matrix $\hbox{Gr}(P)=(\langle v_i, v_j\rangle_{\mathbb Y^{n+1}})_{i,j\in I}$ is a real symmetric matrix with $1$'s on the diagonal and non-positive coefficients off the diagonal. In this way, the theory of Perron-Frobenius applies.  For further details and references about Coxeter polyhedra in $\mathbb X^n$, we refer to \cite{F-web, V1, VinGII}.

\hip

Let $P= \cap_{i=1}^N H_i^{-}\subset \mathbb X^n$ 
be a Coxeter $n$-polyhedron. Denote by $r_i=r_{H_i}$ the reflection in the bounding hyperplane $H_i$ of $P$, and let $G=G_P$ be the group generated by $r_1, \ldots, r_N$.
It follows that $G$ is a discrete subgroup of finite covolume in $\hbox{Isom}\mathbb X^n$, called a {\em geometric Coxeter group}.

A geometric  Coxeter group $G\subset \hbox{Isom}\mathbb X^n$ with generating system $S=\{r_1,\ldots,r_N\}$ is the geometric realisation of an abstract Coxeter system $(W,S)$. In fact, we have $r_i^2=1$ and $(r_ir_j)^{m_{ij}}=1$ with $m_{ij}=m_{ji}\in\{2,3,\dots,\infty\}$ as above. Here, $m_{ij}=\infty$ indicates that $r_ir_j$ is of infinite order. 

For $\mathbb X^n= \mathbb S^n$, $G$ is a {\em spherical} Coxeter group and as such finite. For $\mathbb X^n= \mathbb E^n$, $G$ is a {\em Euclidean} or  {\em affine} Coxeter group and of infinite order. By a result of Coxeter \cite{Coxeter}, the irreducible spherical and Euclidean Coxeter groups are entirely classified. In contrast to this fact, {\em hyperbolic} Coxeter groups are far from being classified. For a survey about partial classification results, we refer to \cite{F-web}.

\hip

For the description of abstract and geometric Coxeter groups, one commonly uses the language of weighted graphs and Coxeter symbols.
Let $(W,S)$ be an abstract Coxeter system with generating system $S=\{s_1,\ldots,s_N\}$ and relations of the form $s_i^2=1$ and $s_is_j^{m_{ij}}=1$ with $m_{ij}=m_{ji}\in\{2,3,\dots,\infty\}$.
The {\em Coxeter graph} of the Coxeter system $(W,S)$ is the non-oriented graph $\Sigma$ whose nodes correspond to the generators $s_1,\ldots,s_N$.   If $s_i$ and $s_j$ do not commute, their nodes $n_i,n_j$ are connected by an edge with weight $m_{ij}\geq 3$.  We omit the weight $m_{ij}=3$ since it occurs frequently. The number $N$ of nodes is the {\em order} of $\Sigma$. 
A subgraph $\sigma\subset \Sigma$ corresponds to a {\em special} subgroup of $(W,S)$, that is, a subgroup of the form $(W_T,T)$ for a subset $T\subset S$. Observe that the Coxeter graph $\Sigma$ is connected if $(W,S)$ is irreducible. 

\hip 

In the case of a geometric Coxeter group $G=(W,S)\subset \hbox{Isom}\mathbb X^n$, we call its Coxeter graph $\Sigma$ {\em spherical}, {\em affine}, or {\em hyperbolic}, if $\mathbb X^n=\mathbb S^n, \mathbb E^n$ or $\mathbb H^n$, respectively.
In Table \ref{affinediag}, we reproduce all the connected affine Coxeter graphs, using the classical notation, with the exception of the three groups $\widetilde E_6, \widetilde E_7, \widetilde E_8$ (they will not appear in the following).

\begin{table}[!h]
\bigskip
\tabcolsep=10pt%
\renewcommand*{\arraystretch}{2.2}
\caption{Connected affine Coxeter graphs of order $n+1$}
\begin{center}
\begin{tabular}{| c c |  c c |}
\hline
 $\widetilde A_n$ & \begin{tikzpicture}
\fill[black] (0,0) circle (0.06cm);
\fill[black] (1/2,0) circle (0.06cm);
\fill[black] (1.6,0) circle (0.06cm);
\fill[black] (2.1,0) circle (0.06cm);
\fill[black] (1.1,1/3) circle (0.06cm);
\draw (0,0) -- (1/2,0) ;
\draw (0.7,0) -- (1/2,0);
\draw (0.7,0) -- (1.4,0) [dotted];
\draw (1.4,0) -- (1.6,0);
\draw (2.1,0) -- (1.6,0);
\draw (0,0) -- (1.1,1/3) ;
\draw (2.1,0) -- (1.1,1/3) ;
\end{tikzpicture} 
& $ \widetilde A_1$ & \begin{tikzpicture}
\tikzstyle{every node}=[font=\small]
\fill[black] (0,0) circle (0.06cm);
\fill[black] (1/2,0) circle (0.06cm);
\draw (0,0) -- (1/2,0) node [above,midway]{$\infty$} ;
\end{tikzpicture} \\
$\widetilde B_n$  & \begin{tikzpicture}
\tikzstyle{every node}=[font=\small]
\fill[black] (1/2,0) circle (0.06cm);
\fill[black] (1,0) circle (0.06cm);
\fill[black] (1.5,0) circle (0.06cm);
\fill[black] (2.5,0) circle (0.06cm);
\fill[black] (3,-1/4) circle (0.06cm);
\fill[black] (3,1/4) circle (0.06cm);
\draw (1,0) -- (1/2,0) node [above,midway]{4};
\draw (1,0) -- (1.5,0) ;
\draw (2.5,0) -- (3,1/4) ;
\draw (2.5,0) -- (3,-1/4) ;
\draw (1.5,0) -- (1.7,0) ;
\draw (2.3,0) -- (1.7,0)[dotted];
\draw (2.3,0) -- (2.5,0) ;
\end{tikzpicture}  & $ \widetilde G_2$ &
\begin{tikzpicture}
\tikzstyle{every node}=[font=\small]
\fill[black] (0,0) circle (0.06cm);
\fill[black] (1/2,0) circle (0.06cm);
\fill[black] (1,0) circle (0.06cm);
\draw (0,0) -- (1/2,0) node [above,midway]{$6$} ;
\draw (1,0) -- (1/2,0) ;
\end{tikzpicture}\\
 $\widetilde C_n$ & \begin{tikzpicture}
\tikzstyle{every node}=[font=\small]
\fill[black] (1/2,0) circle (0.06cm);
\fill[black] (1,0) circle (0.06cm);
\fill[black] (1.5,0) circle (0.06cm);
\fill[black] (2.5,0) circle (0.06cm);
\fill[black] (3,0) circle (0.06cm);
\draw (1,0) -- (1/2,0) node [above,midway]{4};
\draw (1,0) -- (1.5,0) ;
\draw (2.5,0) -- (3,0)  node [above,midway]{4};
\draw (1.5,0) -- (1.7,0) ;
\draw (2.3,0) -- (1.7,0)[dotted];
\draw (2.3,0) -- (2.5,0) ;
\end{tikzpicture}  & $ \widetilde F_4$  & \begin{tikzpicture}
\tikzstyle{every node}=[font=\small]
\fill[black] (0,0) circle (0.06cm);
\fill[black] (1/2,0) circle (0.06cm);
\fill[black] (2/2,0) circle (0.06cm);
\fill[black] (3/2,0) circle (0.06cm);
\fill[black] (4/2,0) circle (0.06cm);
\draw (0,0) -- (1/2,0)  ; 
\draw (2/2,0) -- (1/2,0) node [above, midway]{$4$}; 
\draw (2/2,0) -- (3/2,0) ;
\draw (3/2,0) -- (4/2,0) ; 
\end{tikzpicture}  \\
 $\widetilde D_n$ & \begin{tikzpicture}
\fill[black] (1/2,1/4) circle (0.06cm);
\fill[black] (1/2,-1/4) circle (0.06cm);
\fill[black] (1,0) circle (0.06cm);
\fill[black] (1.5,0) circle (0.06cm);
\fill[black] (2.5,0) circle (0.06cm);
\fill[black] (3,-1/4) circle (0.06cm);
\fill[black] (3,1/4) circle (0.06cm);
\draw (1/2,1/4) -- (1,0) ;
\draw (1/2,-1/4) -- (1,0) ;
\draw (1,0) -- (1.5,0) ;
\draw (2.5,0) -- (3,1/4) ;
\draw (2.5,0) -- (3,-1/4) ;
\draw (1.5,0) -- (1.7,0) ;
\draw (2.3,0) -- (1.7,0)[dotted];
\draw (2.3,0) -- (2.5,0) ;
\end{tikzpicture}  && \\
\hline
\end{tabular}
\end{center}
\label{affinediag}
\end{table}

An abstract Coxeter group with a simple presentation  can conveniently be  described by its {\em Coxeter symbol}. For example, the linear Coxeter graph with edges of successive weights $k_1,\dots,k_N\geq 3$ is abreviated by the Coxeter symbol $[k_1,\dots,k_N]$.  The Y-shaped graph made of one edge with weight $p$ and of two strings of $k$ and $l$ edges emanating from a central vertex of valency $3$ is denoted by $[p,3^{k,l}]$ (see \cite{JKRT}).

\hip 
\hip

Let us specify the context and consider a Coxeter polyhedron $P = \cap_{i=1}^N H_{i}^- $ in $\mathbb H^n$. Denote by $\Gamma = G_P \subset \hbox{Isom}\mathbb H^n$ its asssociated Coxeter group and by $\Sigma$ its Coxeter graph. Since $P$ is of finite volume, the graph $\Sigma$ is connected. Furthermore, if $P$ is not compact, then $P$ has at least one ideal vertex.

\hip
Let $v \in \mathbb H^n$ be an ordinary vertex of $P$. Then, its {\it link} $L_v$ is the intersection of $P$ with a small sphere of centre $v$ that does not intersect any facet of $P$ not incident to $v$. It corresponds to a spherical Coxeter polyhedron of $ \mathbb S^{n-1}$ and therefore to a spherical Coxeter subgraph $\sigma$ of order $n$ in $\Sigma$.  
\hip
Let $v_{\infty} \in \partial \mathbb H^n$  be an ideal vertex of $P$. Then, its link, denoted by $L_{\infty}$, is given by the intersection of $P$ with a sufficiently small horosphere centred at $v_{\infty}$ as above. The link $L_{\infty}$ corresponds to a Euclidean Coxeter polyhedron in $\mathbb E ^{n-1}$ and is related to an affine Coxeter subgraph  $\sigma_{\infty}$ of order $\geq n$ in $\Sigma$. 

\hip
More precisely, if $v_{\infty}$ is a {\em simple} ideal vertex, that is, $v_{\infty}$ is the intersection of exactly $n$ among the $N$ bounding hyperplanes of $P$, the Coxeter graph $\sigma_{\infty}$ is connected and of order $n$. Otherwise, $\sigma_{\infty}$ has $n_c(\sigma_{\infty}) \geq 2$ affine components, and we have the following formula.  \begin{equation}\label{rankorder}
n-1=\hbox{order}(\sigma_{\infty})-n_{c}(\sigma_{\infty}) \, .
\end{equation}
Recall that a polyhedron is {\em simple} if all of its vertices are simple.

\hip

As in the spherical and Euclidean cases, hyperbolic Coxeter 
simplices in $\mathbb H^n$ are all known, and they exist for $n\le 9$ (see \cite{Bou} or \cite{VinGII}). A list of their Coxeter graphs, Coxeter symbols and volumes can be found in \cite{JKRT}. Among the related Coxeter $n$-simplex groups, the group $\Gamma_n$, as given in Table \ref{Poly}, is of minimal covolume.

\hip
The following structural result for {\em simple} hyperbolic Coxeter polyhedra due to  Felikson and Tumarkin \cite[Theorem B]{FT1} will be a corner stone for the proof of our Theorem.  

\begin{theorem}\label{thmFT}
 Let  $n \leq 9$, and let $P\subset \mathbb H^n$ be a non-compact simple Coxeter polyhedron.
 If $P$ has mutually intersecting facets, then $P$ is either a simplex or isometric to the polyhedron  $P_0$ whose Coxeter graph is depicted in Figure \ref{fig:unique}. 
\end{theorem}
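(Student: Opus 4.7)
My plan is to combine two ingredients: (a) the combinatorial rigidity forced by simplicity together with the no-dotted-edge condition, and (b) the classification of connected affine Coxeter diagrams of small order listed in Table \ref{affinediag}. Let $N$ denote the number of facets of $P$, so $N \ge n+1$. The hypothesis that every two facets meet (either in $\mathbb{H}^n$ or on $\partial\mathbb{H}^n$) means that the Coxeter graph $\Sigma$ of $P$ contains no dotted edges: every pair of nodes is joined by an edge of weight in $\{2,3,\dots,\infty\}$, where weight $2$ means commuting generators. Simplicity forces every ordinary vertex link to be a spherical Coxeter subdiagram of order $n$, and every ideal vertex link to be a connected affine Coxeter subdiagram of order $n$, both drawn from Coxeter's classified lists.

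If $N = n+1$, then $P$ is a simplex and there is nothing to check beyond invoking the existing list of non-compact hyperbolic Coxeter simplices for $n\le 9$. So assume $N \ge n+2$. I would fix an ideal vertex $v_\infty$ (which exists since $P$ is non-compact) with affine link $\sigma_\infty$ of order $n$, and analyse how the $N-n \ge 2$ remaining nodes of $\Sigma$ attach to $\sigma_\infty$. For each extra node $s$, the induced subdiagram on $\sigma_\infty \cup\{s\}$ must have no dotted edges, and the full graph $\Sigma$ must satisfy three constraints: (i) hyperbolicity, i.e. the Gram matrix of $\Sigma$ has signature $(n,1)$; (ii) no spherical subdiagram of order larger than $n$; (iii) every connected affine subdiagram of order $n$ actually arises as some vertex link. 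Conditions (i)--(iii) are extremely restrictive. Using Perron--Frobenius together with small-determinant computations on Gram submatrices, one can for each affine type from Table \ref{affinediag} and each dimension $n\le 9$ enumerate the admissible extensions of $\sigma_\infty$ by a single node $s$; the enumeration should leave only those extensions realising $P_0$ or a simplex obtained by collapsing.

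The main obstacle is this finite but delicate case analysis, and in particular ruling out $N \ge n+3$. A naive enumeration branches quickly, but two observations keep it tractable: first, simplicity forces every vertex link to be a classified spherical or affine diagram, which eliminates most edge-weight patterns between $s$ and $\sigma_\infty$; second, the presence of a second extra node $s'$, required as soon as $N\ge n+3$, must interact consistently with the subdiagram on $\sigma_\infty \cup \{s\}$, typically producing a forbidden spherical subdiagram of order $n+1$ or an affine subdiagram not appearing in Table \ref{affinediag}. The most sensitive case is $\sigma_\infty = \widetilde A_1$, where the $\infty$-edge permits many more admissible attachments than the other affine types; this case likely has to be treated in its own right, which incidentally mirrors the dichotomy taken in Section \ref{section3} of the present paper.
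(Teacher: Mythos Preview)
This theorem is not proved in the paper; it is quoted as \cite[Theorem~B]{FT1} (Felikson--Tumarkin) and invoked as a black box in Section~\ref{section3}. There is therefore no in-paper argument to compare your proposal against.

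On the proposal itself: the broad strategy---use simplicity to force every vertex link to be a rank-$n$ spherical or connected affine diagram, fix an affine link $\sigma_\infty$, and study how the remaining $N-n\ge 2$ nodes attach to it under the no-dotted-edge and signature constraints---is indeed the skeleton of the Felikson--Tumarkin argument. But what you have written is a plan, not a proof. The entire content of the theorem lies in the case analysis you explicitly defer: the sentences ``the enumeration should leave only those extensions realising $P_0$ or a simplex'' and a second extra node would ``typically'' produce a forbidden subdiagram are precisely the assertions that require work, and in \cite{FT1} they occupy many pages of Gram-matrix and local-determinant computations organised by the type of $\sigma_\infty$. Two further cautions. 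First, your condition~(iii), that every connected affine subdiagram of order $n$ is realised as an actual vertex link, is not the standard formulation of the finite-volume constraint; Vinberg's criterion instead demands that every connected parabolic subdiagram extend to one of rank $n-1$, together with the signature-$(n,1)$ condition on the full Gram matrix. Second, the $\widetilde A_1$ case is not merely ``sensitive'' but genuinely the hardest: an $\infty$-edge imposes no algebraic relation on the Gram matrix, so the Perron--Frobenius and small-determinant arguments you invoke give much weaker control there, and this part of the classification cannot be dispatched by the same mechanism as the other affine types.
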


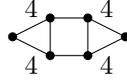
\begin{figure}[!h]
\centering
\begin{tikzpicture}
\tikzstyle{every node}=[font=\small]
\fill[black] (-1/2,1/4) circle (0.06cm);
\fill[black] (0,0) circle (0.06cm);
\fill[black] (1/2,0) circle (0.06cm);
\fill[black] (0,1/2) circle (0.06cm);
\fill[black] (1/2,1/2) circle (0.06cm);
\fill[black] (1,1/4) circle (0.06cm);
\draw (0,0) -- (-1/2,1/4) node [below,midway] {4} ;
\draw (-1/2,1/4) -- (0,1/2) node [above,midway] {4} ;
\draw (1/2,0) -- (1/2,1/2) ;
\draw (1/2,1/2) -- (0,1/2) ;
\draw (1/2,0) -- (0,0) ;
\draw (1,1/4) -- (1/2,0) node [below,midway] {4} ;
\draw (1,1/4) -- (1/2,1/2) node [above,midway] {4} ;
\draw (0,0) -- (0,1/2) ; 
\end{tikzpicture} 
\caption{The Coxeter polyhedron $P_0\subset\mathbb H^4$}
\label{fig:unique}
\end{figure}

\subsection{Growth rates and their monotonicity}\label{section2.2} 
Let $(W,S)$ be a Coxeter system and denote by $a_k\in\mathbb Z$ the number of words $w\in W$ with $S$-length $k$.  The {\em growth series} $f_S(t)$ of $(W,S)$ is defined by \[
f_S(t)=1+\sum\limits_{k\ge1}a_kt^k.\]
\hip
In the following, we list some properties of $f_S(t)$. For references, we refer to \cite{Hum}.
\hip 

There is a formula due to Steinberg expressing the growth series $f_S(t)$ of a Coxeter system $(W,S)$ in terms of its finite special subgroups $W_T$ for $T\subseteq S\,,$
\begin{equation}\label{eq:Steinberg}
\frac{1}{f_S(t^ {-1})}=\sum\limits_{{W_T<W\atop\scriptscriptstyle{{\vert W_T\vert<\infty}}}}\,
\frac{(-1)^{\vert T\vert}}{f_T(t)}\,,
\end{equation}
where $W_{\varnothing}=\{1\}$.
By a result of Solomon, the growth polynomial of each term  $f_T(t)$ in \eqref{eq:Steinberg} can be expressed by means of its exponents $\{m_1,m_2,\ldots,m_p\}$
according to the formula
\begin{equation}\label{eq:Solomon}
f_T(t)=\prod\limits_{i=1}^ {p}\,[m_i+1]\,, 
\end{equation}
where $\,[k]=1+t+\dots+ t^{k-1}$ and, more generally, $\,[k_1,\dots,k_r]:=[k_1]\dots[k_r]\,$. 
A complete list of the irreducible spherical Coxeter groups together with their exponents can be found in \cite{Perren}.  For example, the exponents of the Coxeter group $\hbox{A}_{n}$ with Coxeter graph $\underbrace{
\begin{tikzpicture}
\fill[black] (0,0) circle (0.06cm);
\fill[black] (1/2,0) circle (0.06cm);
\fill[black] (9/6,0) circle (0.06cm);
\fill[black] (12/6,0) circle (0.06cm);
\draw (0,0) -- (1/2,0);
\draw (1/2,0) -- (2/3,0) ;
\draw (2/3,0) -- (4/3,0)[dotted];
\draw (4/3,0) -- (9/6,0) ;
\draw (9/6,0) -- (12/6,0) ;
\end{tikzpicture}}_{n} \,\,$ are $\{1,2,\dots,n\}$ so that 
\begin{equation}\label{An}
f_{A_n}(t)=[2,\dots,n+1]\,. 
\end{equation}
 
\hop
Furthermore, the growth series of a reducible Coxeter system $(W,S)$ with factor groups $(W_1,S_1)$ and $(W_2,S_2)$ such that
$S=(S_1\times\{1_{W_2}\})\cup(\{1_{W_1}\}\times S_2)$ satisfies the product formula
\[ f_S(t)=f_{S_1}(t)\cdot f_{S_1}(t). \]

\hip

In its disk of convergence, the growth series $f_S(t)$ is a rational function, which can be expressed as the quotient of coprime monic polynomials $p(t), q(t)\in\mathbb Z[t]$ of the same degree. 
The {\em growth rate} $\tau_{W}=\tau_{(W,S)}$ is defined by the inverse of the radius of convergence of $f_S(t)$ and can be expressed by 
\[
\tau_{W}=\limsup_{k\rightarrow\infty} {a_k}^{1/k}.
\]
It is the inverse of the smallest positive real pole of $f_S(t)$ and hence an algebraic integer. 

\hip

Important for the proof of our Theorem is the following result of Terragni \cite{Terragni2} about the growth monotonicity.

\begin{theorem} \label{terr} Let $(W,S)$ and $(W',S')$  be two Coxeter systems such that there is an injective map $ \iota:S\rightarrow S' $ with  $ m_{st} \le m'_{\iota(s)\iota(t)}$  for all $ s,t\in S$. 
 Then, $\tau_{(W,S)}\le \tau_{(W',S')}.$ 
\end{theorem}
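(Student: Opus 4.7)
The approach is to split the proof into two natural steps: first a parabolic reduction to the case where $\iota$ is a bijection, and then a term-wise comparison of the two growth series that implies the desired inequality of growth rates.

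\textbf{Step 1 (Parabolic reduction).} I would invoke the classical fact that for any subset $T \subseteq S'$, the special subgroup $(W'_T, T)$ is itself a Coxeter system with the inherited Coxeter matrix and embeds length-preservingly into $(W', S')$: every reduced expression of $w \in W'_T$ using only letters of $T$ is already reduced in $W'$. Consequently the growth series of $(W'_T, T)$ is dominated coefficient-wise by $f_{S'}(t)$, and therefore $\tau_{(W'_T, T)} \le \tau_{(W', S')}$. Applying this observation with $T = \iota(S)$ allows us to replace $(W', S')$ by its parabolic subgroup $(W'_{\iota(S)}, \iota(S))$, whose $m$-matrix dominates that of $(W, S)$ pointwise. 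From now on $\iota$ may be assumed bijective.

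\textbf{Step 2 (Bijective case).} After identifying $S$ with $S'$ via $\iota$, we have two Coxeter systems on a common set $S$ with $m_{st} \le m'_{st}$ for all $s, t$. The goal is to prove the coefficient-wise inequality $a_k \le a'_k$ for all $k \ge 0$, where $f_S(t) = \sum_k a_k t^k$ and $f_{S'}(t) = \sum_k a'_k t^k$. Since $\tau_W = \limsup_k a_k^{1/k}$, this immediately yields $\tau_W \le \tau_{W'}$. I would exhibit this inequality by constructing an injection from the $W$-elements of length $k$ to the $W'$-elements of length $k$. To each $w \in W$, assign a canonical reduced expression --- for instance, the shortlex-minimal one with respect to a fixed total order on $S$ --- and send $w$ to the element of $W'$ represented by the same word in the generators. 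Two points need verification: that a $W$-reduced word is automatically $W'$-reduced (which follows because any shortening exchange available in $W'$ could be replayed in $W$ using the weaker relation $(st)^{m_{st}} = 1$, contradicting $W$-reducedness), and that distinct $W$-elements produce distinct $W'$-elements under this assignment.

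\textbf{Main obstacle.} The delicate point is the injectivity in Step 2. When $m_{st} < m'_{st}$, the $W$-braid identity $sts\cdots = tst\cdots$ of length $m_{st}$ has no counterpart in $W'$, so the assignment $w \mapsto \iota(w)$ is not well-defined on the naive level of arbitrary reduced expressions: a single $W$-element may arise from words representing different $W'$-elements. Working with the canonical shortlex representative side-steps this ambiguity, since the shortlex criterion is determined solely by the chosen ordering of $S$ and by $W$-reducedness, both of which transfer intact to $W'$. Carrying through this verification --- essentially showing that the shortlex-minimal reduced expressions in $W$ are pairwise inequivalent in $W'$ as well --- is the heart of the combinatorial argument and the place where a careful analysis is required. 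With the coefficient comparison established, Theorem \ref{terr} follows at once by taking limsups.
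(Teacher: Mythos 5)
Your statement is one the paper does not prove at all: Theorem \ref{terr} is quoted as a result of Terragni \cite{Terragni,Terragni2}, and the paper only uses it. So your proposal can only be compared with the literature, where the argument indeed runs along the lines you set up: a reduction to standard parabolic subgroups followed by the coefficient-wise inequality $a_k\le a'_k$, obtained from a length-preserving injection of $W$ into $W'$. Your Step 1 is correct and standard, and the statement you isolate in Step 2 is the right key lemma.

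The genuine gap is exactly at the point you yourself call the heart of the argument, and the hint you give for closing it points in a direction that fails. You justify ``$W$-reduced implies $W'$-reduced'' by saying a shortening exchange in $W'$ could be \emph{replayed} in $W$; but when $m_{st}<m'_{st}$ there is in general no homomorphism in either direction (for $m_{st}=3$, $m'_{st}=4$, the relation of either group is violated in the other), so a $W'$-exchange cannot be transported to $W$. The correct mechanism is Tits' solution of the word problem combined with a confinement observation: a consecutive subword of a $W$-reduced word is $W$-reduced, and in the dihedral subgroup $\langle s,t\rangle$ of $W$ the longest reduced alternating word has length $m_{st}$; hence a $W$-reduced word contains no alternating $st$-subword of length $m'_{st}$ unless $m'_{st}=m_{st}$. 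Consequently the only $W'$-braid moves applicable to a $W$-reduced word are moves shared by both systems, and their output is again $W$-reduced; inducting along any sequence of $W'$-braid moves shows that one never reaches a word containing a repeated letter $ss$, so the word is $W'$-reduced (Tits), and that any two $W$-reduced words which are $W'$-equivalent are already $W$-equivalent, which is precisely your injectivity claim. Note that once this lemma is in place, shortlex does no real work: \emph{any} choice of one reduced expression per element of $W$ yields a well-defined, length-preserving, injective map, since canonicity is only needed for well-definedness, while injectivity comes from the braid-move confinement. With this argument inserted, your proof is complete; as written, the central step is asserted rather than proved, and the suggested route to it would not succeed.
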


For $n\geq 2$, consider a Coxeter group $\Gamma \subset \hbox{Isom}\mathbb H^n$ of finite covolume. 
By results of Milnor and de la Harpe, we know that $\tau_{\Gamma}>1$.  
More precisely, and as shown by Terragni \cite{Terragni2}, $ \tau_{\Gamma} \geq  \tau_{\Gamma_9} \approx 1.1380 $, where $\Gamma_9$ is the Coxeter simplex group given in Table \ref{Poly}.

\hip

Next, we introduce another tool in the proof of our result, the {\it extension} of a Coxeter graph.  

\begin{definition} Let $\Sigma$ be an abstract Coxeter graph.
An $\it extension$ of $\Sigma$ is a Coxeter graph $\Sigma'$ obtained by adding one node linked with a (simple) edge to the Coxeter graph $\Sigma$. 
\end{definition}\hip
As a direct consequence of Theorem \ref{terr}, if $W$ is a Coxeter group with Coxeter graph $\Sigma$, any extension $\Sigma'$  of $\Sigma$ encodes a Coxeter group $W'$ such that $\tau_{W}\leq \tau_{W'}$.

\begin{example}\label{affine3} Consider an irreducible affine Coxeter graph of order $3$ as given in Table \ref{affinediag}. Up to symmetry, the graph $\widetilde{A}_2$ has a unique extension given by the Coxeter graph at the top left in Figure \ref{G2etB2}. This graph describes the  Coxeter tetrahedron $[3,3^{[3]}]$ of finite volume. 
The Coxeter graphs $\widetilde C_2$ and $\widetilde G_2$  give rise to the remaining five extensions depicted in Figure \ref{G2etB2}.
By a result of Kellerhals \cite{K0}, these
six Coxeter graphs describe Coxeter tetrahedral groups $\Lambda$  of finite covolume in $\hbox{Isom}\mathbb H^3$ whose growth rates satisfy $\tau_{\Lambda}\geq \tau_{\Gamma_3}$.

\begin{figure}[!h]
\centering
 \begin{tikzpicture}\tikzstyle{every node}=[font=\small]
\fill[black] (0,0) circle (0.06cm);
\fill[black] (1/2,1/4) circle (0.06cm);
\fill[black] (1/2,-1/4) circle (0.06cm);
\fill[black] (-1/2,0) circle (0.06cm);
\draw (0,0) -- (1/2,1/4) ;
\draw (1/2,-1/4) -- (0,0)  ;
\draw (1/2,-1/4) -- (1/2,1/4)  ;
\draw (0,0) -- (-1/2,0) ;
\end{tikzpicture} 
 \qquad  \quad
\begin{tikzpicture}
\tikzstyle{every node}=[font=\small]
\fill[black] (0,0) circle (0.06cm);
\fill[black] (1/2,0) circle (0.06cm);
\fill[black] (1,0) circle (0.06cm);
\fill[black] (3/2,0) circle (0.06cm);
\draw (0,0) -- (1/2,0) node [above,midway] {$4$} ;
\draw (1,0) -- (1/2,0) node [above,midway] {$4$};
\draw (1,0) -- (3/2,0) ;
\end{tikzpicture}
 \qquad  \quad
\begin{tikzpicture}
\tikzstyle{every node}=[font=\small]
\fill[black] (0,0) circle (0.06cm);
\fill[black] (1/2,0) circle (0.06cm);
\fill[black] (2/2,0) circle (0.06cm);
\fill[black] (1/2,1/3) circle (0.06cm);
\draw (0,0) -- (1/2,0) node [above,midway] {$4$} ;
\draw (1,0) -- (1/2,0) node [above,midway] {$4$} ;
\draw (1/2,0) -- (1/2,1/3) ; 
\end{tikzpicture}

\bigskip
\begin{tikzpicture}
\tikzstyle{every node}=[font=\small]
\fill[black] (0,0) circle (0.06cm);
\fill[black] (1/2,0) circle (0.06cm);
\fill[black] (1,0) circle (0.06cm);
\fill[black] (3/2,0) circle (0.06cm);
\draw (0,0) -- (1/2,0) node [above,midway] {$6$} ;
\draw (1,0) -- (1/2,0) ;
\draw (1,0) -- (3/2,0) ;
\end{tikzpicture}  \qquad \quad
\begin{tikzpicture}
\tikzstyle{every node}=[font=\small]
\fill[black] (0,0) circle (0.06cm);
\fill[black] (1/2,0) circle (0.06cm);
\fill[black] (2/2,0) circle (0.06cm);
\fill[black] (3/2,0) circle (0.06cm);
\draw (0,0) -- (1/2,0) ;
\draw (1,0) -- (1/2,0) node [above,midway] {$6$} ;
\draw (1,0) -- (3/2,0) ;
\end{tikzpicture} \qquad  \quad
\begin{tikzpicture}
\tikzstyle{every node}=[font=\small]
\fill[black] (0,0) circle (0.06cm);
\fill[black] (1/2,0) circle (0.06cm);
\fill[black] (2/2,0) circle (0.06cm);
\fill[black] (1/2,1/3) circle (0.06cm);
\draw (0,0) -- (1/2,0) node [above,midway] {$6$} ;
\draw (1,0) -- (1/2,0) ;
\draw (1/2,0) -- (1/2,1/3) ;
\end{tikzpicture}  
\bigskip
\caption{Extensions of $\widetilde A_2$, $\widetilde C_2$ and $\widetilde G_2$}
\label{G2etB2}
\end{figure}
\end{example}

\begin{example}\label{affine4}
In a similar way, any extension of an irreducible affine Coxeter graph of order $4$ yields a Coxeter simplex group of finite covolume in $\hbox{Isom}\mathbb H^4$.  
They are given in Figure \ref{B3etC3}. Notice that $\Gamma_4=[4,3^{2,1}]$ is part of them.

\begin{figure}[!h]
\centering
\begin{tikzpicture}
\fill[black] (0,0) circle (0.06cm);
\fill[black] (1/2,1/4) circle (0.06cm);
\fill[black] (1/2,-1/4) circle (0.06cm);
\fill[black] (-1/2,0) circle (0.06cm);
\fill[black] (1,0) circle (0.06cm);
\draw (0,0) -- (1/2,1/4) ;
\draw (1/2,-1/4) -- (0,0)  ;
\draw (1/2,-1/4) -- (1,0)  ;
\draw (1/2,1/4) -- (1,0)  ;
\draw (0,0) -- (-1/2,0) ;
\end{tikzpicture} 
 \qquad \quad 
\begin{tikzpicture} 
\tikzstyle{every node}=[font=\small]
\fill[black] (0,0) circle (0.06cm); \fill[black] (1/2,0) circle (0.06cm); \fill[black] (2/2,0) circle (0.06cm); \fill[black] (3/2,0) circle (0.06cm); \fill[black] (1/2,1/3) circle (0.06cm); \draw (0,0) -- (1/2,0)   ; \draw (1/2,0) -- (2/2,0)node [above,midway] {$4$} ; \draw (2/2,0) -- (3/2,0)  ; \draw (1/2,1/3) -- (1/2,0); \end{tikzpicture}     
  \qquad \quad  \begin{tikzpicture}
\tikzstyle{every node}=[font=\small]
\fill[black] (0,0) circle (0.06cm);
\fill[black] (1/2,0) circle (0.06cm);
\fill[black] (2/2,0) circle (0.06cm);
\fill[black] (3/2,0) circle (0.06cm);
\fill[black] (1/2,1/3) circle (0.06cm);
\draw (0,0) -- (1/2,0)  node [above,midway] {$4$} ;
\draw (1/2,0) -- (2/2,0) ;
\draw (2/2,0) -- (3/2,0)  ;
\draw (1/2,1/3) -- (1/2,0);
\end{tikzpicture}  

\bigskip
\begin{tikzpicture}
\tikzstyle{every node}=[font=\small]
\fill[black] (0,0) circle (0.06cm);
\fill[black] (1/2,0) circle (0.06cm);
\fill[black] (1,0) circle (0.06cm);
\fill[black] (1/2,1/3) circle (0.06cm);
\fill[black] (1/2,-1/3) circle (0.06cm);
\draw (0,0) -- (1/2,0)  node [above,midway] {$4$} ;
\draw (1,0) -- (1/2,0) ;
\draw (1/2,0) -- (1/2,1/3) ;
\draw (1/2,0) -- (1/2,-1/3) ;
\end{tikzpicture} 
\qquad \quad 
\begin{tikzpicture}
\tikzstyle{every node}=[font=\small]
\fill[black] (0,0) circle (0.06cm);
\fill[black] (1/2,0) circle (0.06cm);
\fill[black] (1,0) circle (0.06cm);
\fill[black] (3/2,0) circle (0.06cm);
\fill[black] (2,0) circle (0.06cm);
\draw (0,0) -- (1/2,0) node [above,midway] {$4$} ;
\draw (1,0) -- (1/2,0) ;
\draw (1.5,0) -- (2,0) ;
\draw (1,0) -- (3/2,0) node [above,midway] {$4$};
\end{tikzpicture}
 \qquad \quad 
\begin{tikzpicture}
\tikzstyle{every node}=[font=\small]
\fill[black] (0,0) circle (0.06cm);
\fill[black] (1/2,0) circle (0.06cm);
\fill[black] (1,0) circle (0.06cm);
\fill[black] (3/2,0) circle (0.06cm);
\fill[black] (1,1/3) circle (0.06cm);
\draw (0,0) -- (1/2,0) node [above,midway] {$4$} ;
\draw (1,0) -- (1/2,0) ;
\draw (1,0) -- (1,1/3) ;
\draw (1,0) -- (3/2,0) node [above,midway] {$4$};
\end{tikzpicture}
\bigskip
\caption{Extensions of $\widetilde A_3$, $\widetilde B_3$ and $\widetilde C_3$}
\label{B3etC3}
\end{figure}
\end{example}

\begin{remark}\label{affinerank}  
When considering irreducible affine Coxeter graphs of order greater than or equal to $5$, the resulting extensions do {\em not} always relate to hyperbolic Coxeter $n$-simplex groups of finite covolume. 
For example, among the extensions of $\widetilde {F_4}$, the graph depicted in Figure \ref{finiteandnot} describes an infinite volume Coxeter simplex in $\mathbb H^5$.
\end{remark}

\begin{figure}[!h]
\begin{center}
\begin{tikzpicture}
\tikzstyle{every node}=[font=\small]
\fill[black] (0,0) circle (0.06cm);
\fill[black] (1/2,0) circle (0.06cm);
\fill[black] (2/2,0) circle (0.06cm);
\fill[black] (3/2,0) circle (0.06cm);
\fill[black] (4/2,0) circle (0.06cm);
\fill[black] (1/2,1/3) circle (0.06cm);
\draw (0,0) -- (1/2,0)  ; 
\draw (1/2,1/3) -- (1/2,0)  ; 
\draw (2/2,0) -- (1/2,0) node [below, midway]{$4$}; 
\draw (2/2,0) -- (3/2,0) ;
\draw (3/2,0) -- (4/2,0) ; 
\end{tikzpicture}
\end{center}
\caption{An infinite volume Coxeter $5$-simplex}
\label{finiteandnot}
\end{figure}
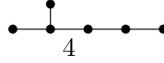
\hop
\hop

\section{Proof of the Theorem}\label{section3}

Let $2 \leq n\leq 9$, and consider the Coxeter simplex group $\Gamma_n \subset \hbox{Isom}\mathbb H^n$ whose Coxeter graph is depicted in Table \ref{Poly}. In this section, we provide the proof of our main result stated as follows.

\begin{thm*}
For any $2 \leq n\leq 9$, the group $\Gamma_{n} $ has minimal growth rate among all non-cocompact hyperbolic Coxeter groups of finite covolume in $\hbox{Isom}\mathbb H^n$, and as such it is unique.
\end{thm*}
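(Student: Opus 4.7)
Let $\Gamma=G_P\subset\Isom\mathbb H^n$ be an arbitrary non-cocompact Coxeter group of finite covolume with fundamental polyhedron $P$ and connected Coxeter graph $\Sigma$. The goal is to show $\tau_\Gamma\geq \tau_{\Gamma_n}$, with equality only if $\Gamma=\Gamma_n$. Since $P$ is non-compact of finite volume, there is at least one ideal vertex $v_\infty\in\partial\mathbb H^n$, whose link contributes an affine Coxeter subgraph $\sigma_\infty\subset\Sigma$ of order $\ge n$ obeying the formula \eqref{rankorder}. I split the proof along the line suggested in the introduction: according to whether or not $\sigma_\infty$ admits a component of type $\widetilde A_1$.

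\textbf{Step 1 (Case with an $\widetilde A_1$-component).} If some $\sigma_\infty$ has an $\widetilde A_1$-factor, then $\Sigma$ contains an edge of weight $\infty$. Since $\Sigma$ is connected with order $\ge n+1\ge 3$, this $\widetilde A_1$ edge is the beginning of an extension inside $\Sigma$. By Terragni's monotonicity result (Theorem~\ref{terr}), $\tau_\Gamma$ is bounded below by the growth rate of the minimal extension of $\widetilde A_1$ that is realisable by a finite-volume Coxeter polyhedron in $\mathbb H^n$. For $n=2$, inspecting the extensions of $\widetilde A_1$ in Table~\ref{affinediag} immediately singles out $[3,\infty]=\Gamma_2$ as the unique minimiser. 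For $n\ge 3$, iterated application of the extension procedure together with the list of affine Coxeter graphs (Table~\ref{affinediag}) and the tabulated growth rates from \cite{Terragni,Terragni2} shows that every graph containing $\widetilde A_1$ and compatible with a non-cocompact hyperbolic Coxeter group of rank $\ge n+1$ has growth rate strictly larger than $\tau_{\Gamma_n}$.

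\textbf{Step 2 (Case without an $\widetilde A_1$-component).} Here every ideal vertex link has only affine components of order $\ge 3$. I further split according to the combinatorics of $P$.\\
\emph{Substep 2a: $P$ simple.} By Theorem~\ref{thmFT} of Felikson--Tumarkin, $P$ is either a hyperbolic Coxeter $n$-simplex or isometric to the exceptional polyhedron $P_0\subset \mathbb H^4$. For the simplex case I consult the complete classification (\cite{JKRT,Bou}) and, via the Steinberg formula \eqref{eq:Steinberg} combined with Solomon's formula \eqref{eq:Solomon}, compute (or cite \cite{Terragni2}) the growth rates of the non-cocompact Coxeter $n$-simplex groups. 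A direct comparison then shows $\Gamma_n$ as the unique minimiser. For $P_0$, one computes $\tau_{P_0}$ from its graph in Figure~\ref{fig:unique} and verifies $\tau_{P_0}>\tau_{\Gamma_4}$.\\
\emph{Substep 2b: $P$ non-simple.} Some ideal vertex has a link with $n_c(\sigma_\infty)\ge 2$ affine components, so by \eqref{rankorder} the order of $\sigma_\infty$ is $\ge n+1$. Using the fact that each additional affine component (necessarily of order $\ge 3$ here) enlarges $\Sigma$, I produce an explicit extension of a carefully chosen affine subgraph whose growth rate, by Theorem~\ref{terr}, lower-bounds $\tau_\Gamma$. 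The allowed configurations of multiple affine components of total ``corank'' $n-1$ are sharply restricted by the classification in Table~\ref{affinediag}, and a finite case inspection (as in Examples~\ref{affine3}, \ref{affine4} and the $\widetilde F_4$ situation of Remark~\ref{affinerank}) yields $\tau_\Gamma>\tau_{\Gamma_n}$ in every instance.

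\textbf{Main obstacle.} The decisive difficulty will be Substep~2b: the non-simple case has no analogue of Theorem~\ref{thmFT} to pin down the polyhedron, so one is forced into a finite but combinatorially delicate enumeration of ways two or more affine components can sit inside a Coxeter graph of the right corank, producing in each case an extension whose growth rate exceeds $\tau_{\Gamma_n}$ and matches $\tau_{\Gamma_n}$ only for $\Gamma=\Gamma_n$ itself. A secondary (but routine) difficulty is the uniqueness claim in Substep~2a, which requires that the numerical comparison of growth rates among simplex groups be strict; this should follow from the data of \cite{Terragni2}.
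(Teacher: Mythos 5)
Your case dichotomy is keyed to the wrong object, and this opens a genuine hole. You split according to whether some ideal vertex link $\sigma_\infty$ contains an $\widetilde A_1$-component. But for $n\geq 4$ a \emph{simple} polyhedron has only simple ideal vertices, whose links are \emph{connected} affine graphs of order $n\geq 4$; hence no simple polyhedron ever lands in your Step 1, and your Substep 2a must handle \emph{all} simple polyhedra with $N\geq n+2$ facets. There you invoke Felikson--Tumarkin, but Theorem \ref{thmFT} carries the hypothesis that $P$ has \emph{mutually intersecting facets}. A simple non-compact Coxeter polyhedron with a pair of disjoint facets satisfies neither your Step 1 condition (the reflections in two disjoint facets generate an infinite dihedral subgroup, so $\Sigma$ has an edge of weight $\infty$, but this edge is invisible in every vertex link) nor the hypothesis of Theorem \ref{thmFT}; such polyhedra are exactly the third alternative left open by Felikson--Tumarkin, and your argument never touches them. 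The paper's dichotomy is instead on whether $\Sigma$ contains a subgraph $\widetilde A_1=[\infty]$ \emph{anywhere}, i.e.\ whether $P$ has a pair of disjoint facets: if yes, a subgraph argument applies uniformly, whether $P$ is simple or not; if no, the facets mutually intersect, Theorem \ref{thmFT} legitimately applies, and (after discarding simplices and $P_0$) the polyhedron must be non-simple, which is what launches the vertex-link analysis.

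Even where your Step 1 does apply, it is not an argument that Theorem \ref{terr} supports. The connected order-$4$ subgraph of $\Sigma$ containing the $\infty$-edge is an arbitrary abstract Coxeter graph with arbitrary weights; it need not be ``realisable by a finite-volume Coxeter polyhedron in $\mathbb H^n$'', so bounding $\tau_\Gamma$ from below by ``the minimal realisable extension of $\widetilde A_1$'' has no basis, and the set of graphs you propose to inspect is infinite. What is needed, and what the paper proves, is a reduction to finitely many explicit \emph{abstract} comparison groups: every connected order-$4$ graph containing $[\infty]$ dominates, in the sense of Theorem \ref{terr}, one of $W_0=[\infty,3,3]$, $W_1=[3,\infty,3]$, $W_2=[\infty,3^{1,1}]$; an explicit Steinberg-formula computation (Lemma \ref{complemma}) shows $\tau_{W_0}<\tau_{W_1}$ and $\tau_{W_0}<\tau_{W_2}$, and the single numerical check $\tau_{\Gamma_4}<\tau_{W_0}\approx 1.4655$ then yields $\tau_{\Gamma_n}<\tau_{W_0}\leq\tau_W\leq\tau_\Gamma$ for all $4\leq n\leq 9$. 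Your Substep 2b is in the right spirit (it is essentially the paper's Section \ref{section3.2}), but note that the same ``realisability'' pitfall recurs there: when the smallest affine component has order $5$ (which happens for $n=9$ via $\widetilde\sigma_5\sqcup\widetilde\sigma_5$), four of the fifteen extensions are Coxeter $5$-simplices of \emph{infinite} covolume, so the lower bound cannot be phrased through finite-covolume simplex groups alone; the paper resolves this by computing the growth rates of these four groups $\Delta_i$ directly and checking $\tau_{\Gamma_5}<\tau_{\Delta_i}$.
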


\hip
 For $n=2$ and for $n=3$, the result has been established by Floyd \cite{Fl} and Kellerhals \cite{K0}. 
Therefore it suffices to prove the Theorem for $4\le n \le 9$.

\hip
Observe that the growth rates of all Coxeter simplex groups in  $\hbox{Isom}\mathbb H^n$ are known. Their list can be found in \cite{Terragni2}.
 In particular, one deduces the following strict inequalities.
\begin{equation}\label{decroissancetau}
\tau_{\Gamma_9} \approx   1.1380 < \dots < \tau_{\Gamma_5} \approx 1.2481 < \tau_{\Gamma_4} \approx 1.3717 \,. 
\end{equation}

\vskip-.5cm
\begin{equation}\label{eq633}
\tau_{\Gamma_5}  < \tau_{\Gamma_3}\approx 1.2964 \, .
\end{equation}

For fixed dimension $n$, one also checks that $\Gamma_n$ has minimal growth rate among (all the finitely many) Coxeter simplex groups $\Lambda \subset \hbox{Isom}\mathbb H^n$.  

As a consequence, we focus on hyperbolic Coxeter groups $\Gamma \subset \hbox{Isom}\mathbb H^n$  generated by at least $N \geq n+2$ reflections in the facets of a non-compact finite volume Coxeter polyhedron $P\subset\mathbb H^n$. We have to show that $\tau_{\Gamma_n}<\tau_{\Gamma}$. 

Suppose that the Coxeter polyhedron $P$ is simple. 
By Theorem $\ref{thmFT}$, $P$ is either isometric to the polyhedron $P_0\subset \hbox{Isom}\mathbb H^4$ depicted in Figure \ref{fig:unique}, or $P$ has a pair of disjoint facets. For the growth rate $\tau$ of the Coxeter group associated to $P_0$, one easily checks with help of the software CoxIter \cite{Gug1, Gug2}
that $\tau_{\Gamma_4} <\tau \approx 2.8383 $. Hence, we can assume that $P$ is not isometric to $P_0$.
If $P$ has a pair of disjoint facets, then the Coxeter graph $\Sigma$ of $P$ and its associated group $\Gamma$ contains a subgraph \begin{tikzpicture}
\fill[black] (0,0) circle (0.06cm);
\fill[black] (1/2,0) circle (0.06cm);
\draw (0,0) -- (1/2,0) node [above, midway]{$\infty$}; 
\end{tikzpicture}\,. 
\\The property that the Coxeter graph $\Sigma$ contains such a subgraph of type $\widetilde A_1 = [\infty]$ allows us to conclude the proof, whether the polyhedron $P$ is simple or not. In the following, we first look at this property and analyse it more closely.

\hop

\subsection{In the presence of $\widetilde A_1$}\label{section3.1}

We start by considering particular Coxeter graphs of order $4$ containing $\widetilde A_1$. Their related growth rates will be useful when comparing with the one of $\Gamma$. This approach is similar to the one developed in \cite{Bred-Kell}. 
\hop
\hop

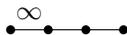
\begin{figure}[!h]
\centering
\begin{tikzpicture}
\tikzstyle{every node}=[font=\small]
\fill[black] (0,0) circle (0.06cm);
\fill[black] (1/2,0) circle (0.06cm);
\fill[black] (2/2,0) circle (0.06cm);
\fill[black] (3/2,0) circle (0.06cm);
\draw (0,0) -- (1/2,0) node [above, midway]{$\infty$};
\draw (1/2,0) -- (2/2,0) ;
\draw (3/2,0) -- (2/2,0) ;
\end{tikzpicture} 
\caption{The Coxeter group $W_0=[\infty,3,3]$}
\label{Sigma0}
\end{figure}

Let $W_0 = [\infty,3,3]$ be the abstract Coxeter group
depicted in Figure \ref{Sigma0}.
By means of the software CoxIter, one checks that 
\begin{equation}\label{infty}
\tau_{\Gamma_4}< \tau_{W_0} \approx 1.4655 \, .
\end{equation}

\hip

Furthermore, consider the two abstract Coxeter groups $W_1=[3,\infty,3]$ and $W_2=[\infty, 3^{1,1}]$ given in Figure \ref{W1W2}. 

\begin{figure}[!h]
\centering
 \begin{tikzpicture}
\tikzstyle{every node}=[font=\small]
\fill[black] (0,0) circle  (0.06cm);
\fill[black] (1/2,0) circle (0.06cm);
\fill[black] (2/2,0) circle (0.06cm);
\fill[black] (3/2,0) circle (0.06cm);
\draw (0,0) -- (1/2,0) ; 
\draw (1/2,0) -- (2/2,0) node [above, midway]{$\infty$}; 
\draw (2/2,0) -- (3/2,0) ;
\end{tikzpicture}  
\qquad  \qquad 
\begin{tikzpicture}
\tikzstyle{every node}=[font=\small]
\fill[black] (0,0) circle (0.06cm);
\fill[black] (1/2,0) circle (0.06cm);
\fill[black] (2/2,0) circle (0.06cm);
\fill[black] (1/2,1/3) circle (0.06cm);
\draw (0,0) -- (1/2,0) node [above, midway]{$\infty$}; 
\draw (2/2,-0) -- (1/2,0) ;
\draw (1/2,0) -- (1/2,1/3) ; 
\end{tikzpicture}
\caption{The Coxeter groups $W_1=[3,\infty,3]$ and $W_2=[\infty, 3^{1,1}]$}
\label{W1W2}
\end{figure}
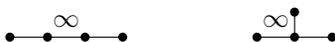

For their growth rates, we prove the following auxiliary result.  

\begin{lemma} \label{complemma}
$\tau_{W_0} < \tau_{W_1}$ and $ \tau_{W_0}< \tau_{W_2}$.
\end{lemma}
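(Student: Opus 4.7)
The plan is to compute the three growth series $f_{W_i}(t)$ for $i = 0,1,2$ explicitly via Steinberg's formula \eqref{eq:Steinberg} and then to compare the smallest positive real poles, which are the reciprocals of the growth rates. Note that the monotonicity Theorem \ref{terr} cannot be invoked directly: in either of $W_1$ or $W_2$, an admissible injection $\iota$ from the generator set of $W_0$ must send the unique $\infty$-edge of $W_0$ to the unique $\infty$-edge of the target, and a short case analysis shows that this forces one of the two $3$-labelled edges adjacent to the $\infty$-edge of $W_0$ to be mapped to a commuting pair of generators, violating the condition $m_{st}\leq m'_{\iota(s)\iota(t)}$.

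The first step is, for each $W_i$, to enumerate all spherical special subgroups $W_T \leq W_i$, that is, the subsets $T \subseteq S$ whose induced Coxeter subgraph contains no $\infty$-edge and whose irreducible components are of spherical type (only $A_1, A_2, A_3$ occur in our setting). Solomon's formula \eqref{eq:Solomon} together with \eqref{An} then expresses each $f_T(t)$ as an explicit product of factors $[k] = 1 + t + \cdots + t^{k-1}$ with $k \in \{2,3,4\}$.

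The second step is to substitute into Steinberg's formula and clear denominators, which yields $f_{W_i}(t) = p_i(t)/q_i(t)$ with coprime $p_i, q_i \in \mathbb Z[t]$ of equal degree. Then $\tau_{W_i} = 1/t_i$, where $t_i$ is the smallest positive real root of $q_i$. Finally, I would evaluate the three growth rates numerically, either by direct root finding or by invoking the software CoxIter \cite{Gug1, Gug2}, already used in \eqref{infty} to obtain $\tau_{W_0} \approx 1.4655$, and thereby read off the two strict inequalities $\tau_{W_0} < \tau_{W_1}$ and $\tau_{W_0} < \tau_{W_2}$.

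The main difficulty is the combinatorial bookkeeping: each group admits up to $16$ candidate subsets of generators, and missing or misclassifying a single spherical one would corrupt $q_i$. Once the rational functions are correctly assembled, the comparison of the smallest positive real roots is routine, and the required estimates follow.
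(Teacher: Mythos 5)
Your proposal is correct in substance and sets up the computation exactly as the paper does: it enumerates the spherical special subgroups of $W_0$, $W_1$, $W_2$ and assembles the reciprocal growth series via Steinberg's formula \eqref{eq:Steinberg} and Solomon's formula \eqref{eq:Solomon}; your list of occurring components ($A_1$, $A_2$, $A_3$) is accurate, and so is your observation that Theorem \ref{terr} cannot be applied directly to compare these three groups (though, as a minor point, in $W_0$ only one of the two $3$-edges is adjacent to the $\infty$-edge). Where you genuinely diverge is the final step: you propose to isolate the smallest positive poles numerically (or via CoxIter), whereas the paper never computes $\tau_{W_1}$ or $\tau_{W_2}$ at all. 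Instead it subtracts the reciprocal growth functions and notes that, because $W_0$, $W_1$, $W_2$ have identical spherical subgroup counts in ranks $0$, $1$ and $2$ and differ only in rank $3$, almost everything cancels, leaving
\[
\frac1{f_{0}(t^{-1})}-\frac{1}{f_1(t^{-1})}=\frac1{[2,2,3]} - \frac1{[2,3,4]} =\frac{t^2+t^3}{[2,2,3,4]}\,,
\qquad
\frac1{f_{0}(t^{-1})}-\frac{1}{f_2(t^{-1})}=\frac1{[2,2,2]} - \frac1{[2,2,3]} =\frac{t^2}{[2,2,2,3]}\,,
\]
which are manifestly positive for all $t>0$. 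Since all three reciprocal functions equal $1$ at $u=t^{-1}=0$, this pointwise positivity forces the smallest positive zero of $1/f_0(u)$ to lie strictly to the right of that of $1/f_i(u)$, i.e.\ $\tau_{W_0}<\tau_{W_i}$ for $i=1,2$, with no numerics and no need to clear denominators into coprime polynomial form. Your route is legitimate: the poles in question are roots of explicit integer polynomials, so a numerical comparison can be made rigorous, and the paper itself accepts CoxIter evaluations for similar strict inequalities, e.g.\ in \eqref{infty} and \eqref{Ei}. But it is heavier, since it requires correctly assembling three rational functions and then certifying three root separations, exactly the bookkeeping risk you identify; the paper's cancellation argument reduces the whole lemma to two one-line polynomial identities and buys exactness for free.
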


\begin{proof}

For $0\leq i\leq 2$, denote by $f_i:=f_{W_i}$ the growth series of $W_i$ and  by $R_i$  its radius of convergence. Recall that $R_i$ is the smallest positive pole of $f_i$, and that $\tau_{W_i}=\frac1{R_i}$. 
\hip
We establish the growth functions $f_i$ according to Steinberg's formula (\ref{eq:Steinberg}). They are given as follows. 

$$
\begin{array}{lll}
\frac{1}{f_{0}(t^{-1})}=& 1-\frac{4}{[2]}+\frac3{[2,2]}+\frac2{[2,3]}- \frac1{[2,2,3]}-\frac1{[2,3,4]}  \quad ; \\
\\
\frac{1}{f_1(t^{-1})}= &1-\frac{4}{[2]}+\frac3{[2,2]} +\frac2{[2,3]}- \frac2{[2,2,3]} \quad  ; \\
\\
\frac{1}{f_2(t^{-1})}=&  1-\frac{4}{[2]}+\frac3{[2,2]}+\frac2{[2,3]}- \frac1{[2,2,2]}-\frac1{[2,3,4]}  \quad.
\end{array}
$$

\hop
Hence, for any $t>0$, one has the positive difference functions given by
$$
\begin{array}{lll}
\frac1{f_{0}(t^{-1})}-\frac{1}{f_1(t^{-1})}&=\frac1{[2,2,3]} - \frac1{[2,3,4]} =\frac{t^2+t^3}{[2,2,3,4]} > 0 \quad; \\
\\
\frac1{f_{0}(t^{-1})}-\frac{1}{f_2(t^{-1})}&=\frac1{[2,2,2]} - \frac1{[2,2,3]} =\frac{t^2}{[2,2,2,3]} > 0 \quad.
\end{array}
$$
\hip
Therefore, for $i=1,2$, and for $u=t^{-1}\in (0,1)$, the smallest positive root $R_0$ of $\frac1{f_{0}(u)}$ is strictly bigger than the one of $\frac1{f_{i}(u)}$. This finishes the proof.

\end{proof}

\hop
As a first consequence, combining (\ref{decroissancetau}), (\ref{infty}) and Lemma \ref{complemma}, one obtains that \begin{equation}\label{Sigmai}
\tau_{\Gamma_n} <\tau_{W_i} \quad
\end{equation} 
 for all $4\leq n \leq 9$ and  $0\le i \le 2$. 

\hip 

Next, suppose that the Coxeter graph $\Sigma$ of $\Gamma$ contains a subgraph $\widetilde A_1$. 
Since $\Sigma$ is connected of order $N \geq  n+2 \geq 6$, the subgraph $\widetilde A_1$ is contained in a connected subgraph $\sigma$ of order $4$ in $\Sigma$,
which is related to a special subgroup $W$ of $\Gamma$. %
By Theorem \ref{terr}, 
 one has that $\tau_{W_i}\leq \tau_{W}$ for some $0\le i \le 2$. By combining  (\ref{Sigmai}) with these findings, and by Theorem \ref{terr} and Lemma \ref{complemma}, one deduces that 
\begin{equation}\label{simple}
\tau_{\Gamma_n} < \tau_{W_0} \le \tau_{W} \le \tau_{\Gamma} \, .
\end{equation}

This finishes the proof of the Theorem in the presence of a subgraph $\widetilde A_1$ in $\Sigma$.

\subsection{In the absence of $\widetilde A_1$}\label{section3.2}

Suppose that the Coxeter graph $\Sigma$ with $N\geq n+2$ nodes does {\it not} contain a subgraph of type $\widetilde A_1$. In particular, by Theorem \ref{thmFT}, the corresponding Coxeter polyhedron $P \subset \hbox{Isom}\mathbb H^n$ is not simple, and it follows that $5\le n\le 9$. 

Consider a non-simple ideal vertex $v_{\infty}\in P$. Its link $L_{\infty} \subset \mathbb E^{n-1}$ is described by a reducible affine subgraph $\sigma_{\infty}$ with $n_c=n_c(\infty)\geq 2$ components which satisfies $
n-1=\hbox{order}(\sigma_{\infty})-n_{c}$ by (\ref{rankorder}).
In Table \ref{2components}, we list all possible realisations for 
$\sigma_{\infty}$ by using the following notations.\\
Let $\widetilde\sigma_{k}$ be a connected affine Coxeter graph of order $k\geq 3$ as listed in Table \ref{affinediag}, and denote by  $\bigsqcup\limits_k \widetilde\sigma_{k}$ the Coxeter graph consisting of the components of type $\widetilde\sigma_{k}$.

\begin{table}[!h]
\caption{Reducible affine Coxeter graphs $\sigma_{\infty}$ with $n_c\geq 2$ components $\widetilde\sigma_{k}$ of order $k\geq 3$ such that $n=\hbox{order}(\sigma_{\infty})-n_{c}+1$}
\renewcommand*{\arraystretch}{1.3}
\begin{center}
\begin{tabular}{|c| c |c|c|c|c |}
\hline
$n$ & $5$& $6$&$7$&$8$ & $9$\\
\hline
\multicolumn{1}{c|}{} &$\widetilde\sigma_3 \sqcup \widetilde\sigma_3$& $\widetilde\sigma_3 \sqcup \widetilde\sigma_4$  & $\widetilde\sigma_3 \sqcup \widetilde\sigma_5$ & $\widetilde\sigma_3 \sqcup \widetilde\sigma_6$ & $\widetilde\sigma_3 \sqcup \widetilde\sigma_7$\\
\multicolumn{3}{c|}{}  & $\widetilde\sigma_4 \sqcup \widetilde\sigma_4$ & $\widetilde\sigma_4 \sqcup \widetilde\sigma_5$ & $\widetilde\sigma_4 \sqcup \widetilde\sigma_6$ \\
\multicolumn{3}{c|}{}   &  $\widetilde\sigma_3 \sqcup \widetilde\sigma_3 \sqcup \widetilde\sigma_3$& $\widetilde\sigma_3 \sqcup \widetilde\sigma_3 \sqcup \widetilde\sigma_4$ & $\widetilde\sigma_5 \sqcup \widetilde\sigma_5$\\
\multicolumn{5}{c|}{}    & $\widetilde\sigma_3 \sqcup \widetilde\sigma_4 \sqcup \widetilde\sigma_4$ \\
\multicolumn{5}{c|}{}   & $\widetilde\sigma_3 \sqcup \widetilde\sigma_3 \sqcup \widetilde\sigma_3 \sqcup \widetilde\sigma_3$ \\
\end{tabular}
\end{center}
\label{2components}
\end{table}

\hip

Observe that for any graph $\bigsqcup\limits_{k} \widetilde \sigma_{k}$
 in Table \ref{2components}, one has $3\leq \min\limits_k k \leq 5$,  and that the case $\min\limits_k k=5$ appears only when $n=9$. \hip Among the different components of $\sigma_{\infty}$, we consider the ones of smallest order $\geq 3$ together with their extensions. 
\hop

\noindent
$\bullet \quad$ Assume that the graph $\sigma_{\infty}$ of the vertex link $L_{\infty}$ contains an affine component $\widetilde \sigma$ of order $3$.
By Example $\ref{affine3}$, we know that any extension of $\widetilde \sigma$ encodes a Coxeter tetrahedral group $\Lambda\subset\hbox{Isom}\mathbb H^3$ of finite covolume. The graph $\Sigma$ itself contains a subgraph $\sigma$ of order $4$ which in turn comprises $\widetilde \sigma$. The Coxeter graph $\sigma$ corresponds to a special subgroup $W$ of $\Gamma$, and by Theorem \ref{terr}, we deduce that $\tau_{\Lambda}\leq \tau_{W}$.

Since $\tau_{\Gamma_3}\le \tau_{\Lambda}$, and in view of  (\ref{decroissancetau}) and (\ref{eq633}), Theorem \ref{terr} yields the desired inequality  \begin{equation}\label{comp3}
\tau_{\Gamma_{n}} < \tau_{\Gamma_3} \leq  \tau_{\Lambda} \leq \tau_W \leq \tau_{\Gamma} \,,
\end{equation}
which finishes the proof in this case, and for $n=5$ and $n=6$; see Table \ref{2components}. 

\hop

\noindent
$\bullet \quad$ Assume that the graph $\sigma_{\infty}$ contains an affine component $\widetilde \sigma$ of order $4$. We apply the same reasoning as above. By Example $\ref{affine4}$, any extension of $\widetilde \sigma$ corresponds to a Coxeter 4-simplex group $\Lambda$ of finite covolume, and $\tau_{\Gamma_4} \leq \tau_{\Lambda}$. Again, $\Sigma$ contains a subgraph $\sigma$ comprising $\widetilde \sigma$. Hence, there exists a special subgroup $W$ of $\Gamma$ described by $\sigma$ so that
\begin{equation}\label{comp4}
\tau_{\Gamma_n}  < \tau_{\Gamma_4} \leq \tau_{\Lambda} \leq \tau_{W} \leq \tau_{\Gamma}   \,.
\end{equation}

\hip

By
(\ref{comp3}) and (\ref{comp4}) the proof is finished in this case, and for $n=7$ and $n=8$; see Table \ref{2components}.

\hip

\noindent
$\bullet \quad$ Assume that $\sigma_{\infty}$ contains an affine component $\widetilde \sigma$ of order $5$.  By Table \ref{2components}, one has $7\leq n\leq 9$. It is not difficult to list all possible extensions of $\widetilde \sigma$. There are exactly fifteen such extensions. It turns out that there are eleven extensions that encode Coxeter $5$-simplex groups of finite covolume, while the remaining four extensions describe Coxeter $5$-simplex groups $\Delta_i, i=1,\dots,4,$ of {\it infinite} covolume. These last four simplices arise by extending  $\widetilde B_4$, $\widetilde C_4$ and $\widetilde F_4$. They are given in Figure \ref{nonfinite}, together with their associated growth rates computed with CoxIter.

\hop
\begin{figure}[!h]
\begin{center}
\begin{tikzpicture}
\tikzstyle{every node}=[font=\small]
\fill[black] (0,0) circle (0.06cm);
\fill[black] (1/2,0) circle (0.06cm);
\fill[black] (2/2,0) circle (0.06cm);
\fill[black] (3/2,0) circle (0.06cm);
\fill[black] (2/2,1/3) circle (0.06cm);
\fill[black] (1/2,1/3) circle (0.06cm);
\draw (0,0) -- (1/2,0)   node [below,midway] {4};
\draw (1/2,0) -- (2/2,0);
\draw (1/2,1/3) -- (1/2,0) ;
\draw (2/2,0) -- (3/2,0) ;
\draw (2/2,1/3) -- (2/2,0)  ;
\end{tikzpicture}
\quad  \quad
\begin{tikzpicture}
\tikzstyle{every node}=[font=\small]
\fill[black] (0,0) circle (0.06cm);
\fill[black] (1/2,0) circle (0.06cm);
\fill[black] (1,0) circle (0.06cm);
\fill[black] (3/2,0) circle (0.06cm);
\fill[black] (2,0) circle (0.06cm);
\fill[black] (1/2,1/3) circle (0.06cm);
\draw (0,0) -- (1/2,0)  node [below, midway]{$4$} ;
\draw (1,0) -- (1/2,0) ;
\draw (3/2,0) -- (1,0);
\draw (1/2,0) -- (1/2,1/3);
\draw (2,0) -- (3/2,0) node [below,midway] {4};
\end{tikzpicture}
\quad \quad
\begin{tikzpicture}
\tikzstyle{every node}=[font=\small]
\fill[black] (0,0) circle (0.06cm);
\fill[black] (1/2,0) circle (0.06cm);
\fill[black] (1,0) circle (0.06cm);
\fill[black] (3/2,0) circle (0.06cm);
\fill[black] (2,0) circle (0.06cm);
\fill[black] (1/2,1/3) circle (0.06cm);
\draw (0,0) -- (1/2,0) ;
\draw (1,0) -- (1/2,0)  node [below, midway]{$4$} ;
\draw (3/2,0) -- (1,0);
\draw (1/2,0) -- (1/2,1/3);
\draw (2,0) -- (3/2,0);
\end{tikzpicture}
\quad \quad
\begin{tikzpicture}
\tikzstyle{every node}=[font=\small]
\fill[black] (0,0) circle (0.06cm);
\fill[black] (1/2,0) circle (0.06cm);
\fill[black] (1,0) circle (0.06cm);
\fill[black] (3/2,0) circle (0.06cm);
\fill[black] (2,0) circle (0.06cm);
\fill[black] (1,1/3) circle (0.06cm);
\draw (0,0) -- (1/2,0)  ;
\draw (1,0) -- (1/2,0) node [below, midway]{$4$} ;
\draw (3/2,0) -- (1,0);
\draw (1,1/3) -- (1,0);
\draw (2,0) -- (3/2,0);
\end{tikzpicture}
\end{center}
\begin{center}
$\tau_{\Delta_1}\approx 1.678 \quad \qquad 
\tau_{\Delta_2}\approx 1.599 \quad \qquad 
\tau_{\Delta_3}\approx 1.668 \quad \qquad 
\tau_{\Delta_4}\approx 1.702 
$
\end{center}
\caption{The Coxeter groups $\Delta_i, i=1,\dots,4$}
\label{nonfinite}
\end{figure}
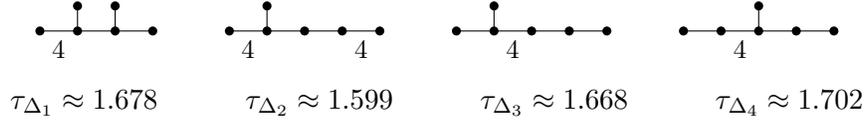

\hip In view of (\ref{decroissancetau}), it turns out that \begin{equation}\label{Ei}
\tau_{\Gamma_5}<\tau_{\Delta_i} \quad \hbox{ for } i=1,\dots,4 \, .
\end{equation}
\hip
As above, the component $\widetilde \sigma$ lies in a subgraph $\sigma$ of order $6$ in $\Sigma$, and the latter corresponds to a special subgroup $W$ of $\Gamma$ so that \[
\hbox{ either } \quad \tau_{\Lambda}\leq \tau_{W}   \quad\hbox{ or } \quad \tau_{\Delta_i}\leq \tau_{W} \quad, \quad  1\leq i\leq 4\ ,
\]
where $\Lambda$ is a Coxeter $5$-simplex group of finite covolume. 
Since $ \tau_{ \Gamma_5} \leq \tau_{\Lambda}$, and by  (\ref{decroissancetau}), (\ref{Ei}), one deduces that
 \begin{equation}\label{comp5}
 \tau_{\Gamma_n} < \tau_{ \Gamma_5} \leq \tau_{W} \leq \tau_{\Gamma}\, .
\end{equation}
 
\hop

This finishes the proof of this case. 

\noindent
Finally, all the above considerations allow us to
conclude the proof of the Theorem.

{\hfill$\square$}

\end{document}